\newcommand{\be}{\begin{equation}}
\newcommand{\ee}{\end{equation}}
\newcommand{\bea}{\begin{eqnarray}}
\newcommand{\eea}{\end{eqnarray}}
\newcommand{\R}{\mathbb{R}}
\newtheorem{theorem}{Theorem}[section]
\begin{document}

\title{Local well-posedness and singularity formation in non-Newtonian compressible fluids}

\author{Ariel Lerman}
\email{ari.lerman@yale.edu}
\affiliation{Department of Mathematics, University of Illinois at Urbana-Champaign, Urbana, IL 61801, USA}
\affiliation{Department of Mathematics, Yale University, New Haven, Connecticut 06511, USA}


\author{Marcelo M.\ Disconzi}
\email{marcelo.disconzi@vanderbilt.edu}
\affiliation{Department of Mathematics, Vanderbilt University, Nashville, TN, 37211, USA}

\author{Jorge Noronha}
\email{jn0508@illinois.edu}
\affiliation{Illinois Center for Advanced Studies of the Universe\\ Department of Physics, 
University of Illinois at Urbana-Champaign, Urbana, IL 61801, USA}

\begin{abstract}
We investigate the initial value problem of a very general class of $3+1$ non-Newtonian compressible fluids in which the viscous stress tensor with shear and bulk viscosity relaxes to its Navier-Stokes values. These fluids correspond to the non-relativistic limit of well-known Israel-Stewart-like theories used in the relativistic fluid dynamic simulations of high-energy nuclear and astrophysical systems. After establishing the local well-posedness of the Cauchy problem, we show for the first time in the literature that there exists a large class of initial data for which the corresponding evolution breaks down in finite time due to the formation of singularities. This implies that a large class of non-Newtonian fluids do not have finite solutions defined at all times. 
\end{abstract}


\maketitle

\section{Introduction}

Fluid dynamics \cite{LandauLifshitzFluids} plays a crucial role in the understanding of a variety of physical phenomena defined at a multitude of length scales, ranging from the large-scale structure properties of the universe \cite{WeinbergCosmology} to subatomic quark-gluon plasmas formed in colliders \cite{Romatschke:2017ejr}. The ubiquitousness of fluid dynamics stems from the fact that the equations of motion that govern the hydrodynamic variables can be derived from very simple, yet fundamental, conservation laws (mass, energy, and momentum) \cite{LandauLifshitzFluids} together with general assumptions regarding the locality of interactions \cite{doi:10.1126/science.284.5411.87}. Most fluids in nature can be described this way through the celebrated Navier-Stokes equations \cite{LandauLifshitzFluids}, which describe how the fluid mass density, velocity, and temperature evolve in space and time in the presence of external and internal viscous/dissipative forces. Furthermore, understanding the mathematical properties of the Navier-Stokes equations is also a fundamental problem in modern mathematics.

Many fundamental questions also remain in fluid dynamics in the relativistic regime. The basic equations of relativistic fluid dynamics stem from the conservation of energy and momentum, $\partial_\mu T^{\mu\nu}=0$, where $T^{\mu\nu}$ is the system's energy-momentum tensor, and the dynamics of a conserved current, $\partial_\mu J^\mu=0$, which describes, for example, baryon number conservation. In the absence of dissipation (i.e., in the zero entropy production limit), this defines a set of five nonlinear partial differential equations (PDEs) that describe the time and spatial evolution of the system's energy density, baryon density, and flow velocity \cite{Rezzolla_Zanotti_book}. Several properties \cite{ChoquetBruhatGRBook} are known about the so-called relativistic perfect fluid equations, and a detailed account of all the developments in this field is beyond the scope of this work (see, for example, \cite{Disconzi:2023rtt} for a recent review). However, a few results are worth mentioning here as they are connected to, or have directly influenced, the work presented in this paper. For example, conditions for the local well-posedness of the Cauchy problem (assuming standard physical constraints on the equation of state) can be found in \cite{ChoquetBruhatGRBook} by rewriting the system of equations as a first-order symmetric hyperbolic (FOSH) set of partial differential equations (PDEs). In this regard, we emphasize the vital role of geometric-analytic techniques from the theory of quasilinear wave equations in studying relativistic perfect fluids. This is also illustrated by the work presented in Ref.\ \cite{2019AnHP...20.2173D} where a new formulation of the relativistic perfect fluid equations tailored to the
characteristics of the system (the sound cones and the flow lines) has been used to investigate complex mathematical questions for which detailed information
about the behavior of solutions is needed. This led to new insights into the problem of shock formation for relativistic perfect fluids \cite{Abbrescia-Speck-2023-arxiv} and improved regularity of
solutions \cite{Sifan-Yu-2022-arxiv}. Further developments concerning the local well-posedness theory for the free boundary relativistic Euler equations with a physical vacuum boundary on a Minkowski background can be found in \cite{2022ArRMA.245..127D}. 
Global well-posedness for the relativistic Euler equations in an expanding background has been proven in 
\cite{MR3101792} (see \cite{SpeckNonlinearStability, MR3335528, MR3120746} for the case with coupling to Einstein's equations and also \cite{Oliynyk-2021,Fajman-Oliynyk-Wyatt-2021}).
Despite such developments, important questions remain open in the theory of relativistic ideal fluids, including a detailed description of the long-time behavior of shock-forming solutions 
to the relativistic Euler equations \cite{ChristodoulouShocks,Christodoulou-Book-2019}, local well-posedness of the free-boundary Einstein-Euler system \cite{RendallFluidBodies,Disconzi:2023rtt}, and a description of anomalous dissipation in relativistic ideal fluids and its connection to relativistic turbulence \cite{Eyink:2017zfz}, to mention just a few examples.

The search for a consistent formulation of dissipative effects in relativistic fluids still attracts significant interest from physicists \cite{Romatschke:2017vte} and mathematicians \cite{Disconzi:2023rtt}. This is due, in part, to the fact that the standard generalization of the Navier-Stokes equations to the relativistic domain, pioneered by Eckart \cite{EckartViscous} and Landau and Lifshitz \cite{LandauLifshitzFluids}, display unwanted features. These theories have unphysical behavior manifested via causality violation and the fact that in such formulations, the global equilibrium state is generally unstable to linear perturbations \cite{Hiscock_Lindblom_instability_1985}. The latter is now understood to follow from the very general (and intuitive result) only proved recently in \cite{Bemfica:2020zjp,Gavassino:2021owo}, which states that in a relativistic system, causality is a necessary condition for the stability of the equilibrium state. The formulations from Eckart, Landau, and Lifshitz give rise to non-hyperbolic PDEs \cite{PichonViscous}, which necessarily violate relativistic causality, thus allowing for the appearance of unphysical instabilities of the equilibrium state.

Only very recently a general formulation of these so-called first-order theories has been proposed by Bemfica, Disconzi, Noronha, and Kovtun (BDNK) \cite{BemficaDisconziNoronha,Kovtun:2019hdm,Bemfica:2019knx,Bemfica:2020zjp}, in which conditions are given that ensure that the dynamics is causal and strongly hyperbolic in the fully nonlinear regime (including shear, bulk, and conductivity effects), and the equilibrium state is stable against small perturbations. The BDNK formulation consistently generalizes previous developments presented in \cite{EckartViscous,LandauLifshitzFluids,Van:2007pw,Tsumura:2007wu,TempleViscous,TempleViscous2,TempleViscous3} where it was understood that the issues found in the theories of Eckart and Landau and Lifshitz were connected to problematic definitions of the hydrodynamic fields in an out of equilibrium state. In BDNK, following the standard effective theory reasoning \cite{Kovtun:2012rj}, non-equilibrium corrections are described in terms of the most general expressions compatible with the symmetries involving first-order spacetime derivatives of the standard hydrodynamic variables (e.g., temperature, fluid velocity, and chemical potential), which vanish in equilibrium. This generates several new terms in the equations of motion, which were not considered in the Eckart and Landau-Lifshitz formulations. These new terms are crucial to recovering relativistic causality and stability. As a matter of fact, for BDNK theories, local well-posedness has also been established for the system of PDEs describing the viscous fluid coupled to Einstein's equations in \cite{BemficaDisconziNoronha,Bemfica:2019knx,Bemfica:2020zjp,Bemfica-Disconzi-Rodriguez-Shao-2021,Bemfica-Disconzi-Graber-2021}. Shockwave solutions {\cite{Freistuhler:2021lla,Pellhammer-2023-arxiv}, as well as general numerical solutions to such fluid dynamic theories are currently being developed, and progress in this direction can be found in \cite{Pandya:2021ief,Pandya:2022pif,Pandya:2022sff,Bantilan:2022ech,Rocha:2022ind}. Even though local well-posedness has been established, very little is known about the global aspects of solutions of this theory. While small-data global well-posedness for the BDNK system has been recently proven in \cite{Sroczinski-2022-arxiv} for the case of a conformal fluid in flat space-time (which is likely the simplest possible setting), it remains a challenge to generalize to 
the BDNK system most of the breakthroughs obtained for relativistic perfect fluids mentioned above. Therefore, when it comes to studying the global well-posedness of solutions, it is conceivable that formulations of relativistic viscous fluid dynamics naturally\footnote{Of course, one can define new variables and reduce the order of a 2nd-order set of PDEs to find a new set of 1st-order differential equations. However, this process is not guaranteed to generate a first-order system with good structure.} described in terms of a set of 1st-order transport-like PDEs, more akin to the relativistic Euler equations, may provide a  better route. Luckily, such formulations already exist, as we discuss below.

Relativistic viscous fluid dynamical modeling is the primary tool to describe the evolution of the quark-gluon plasma created in ultra-relativistic heavy-ion collisions \cite{Heinz:2013th}. Current simulations (for a review, see \cite{Romatschke:2017ejr}) are based on equations of motion that incorporate features and ideas from the seminal work by Israel and Stewart \cite{MIS-6}. These approaches, see \cite{Baier:2007ix,Denicol:2012cn}, differ from the BDNK reasoning mentioned above because the dissipative terms are not expressed solely in terms of the hydrodynamic variables and their derivatives. Instead, in those theories\footnote{Generalizations of this approach, aimed at describing systems even farther from equilibrium, have also been developed in the context of relativistic anisotropic hydrodynamics \cite{Alqahtani:2017mhy}.}, the dissipative fluxes obey additional equations of motion (derived either from truncations in kinetic theory \cite{Denicol:2012cn}, effective theory arguments \cite{Baier:2007ix}, or extended thermodynamics \cite{MuellerRuggeriBook,JouetallBook}) that describe how dissipative quantities evolve towards their first-order, asymptotic behavior described by relativistic Navier-Stokes theory. This process is characterized by a relaxation time that effectively implements causality by introducing a time delay for the dissipative currents to approach Navier-Stokes values -- causality imposes that flow gradients cannot be automatically converted into acceleration (as it occurs in Navier-Stokes equations). Therefore, in practice, causality requires relativistic fluids to be non-Newtonian fluids. This is the case not only for Israel-Stewart systems since BDNK fluids also display the same property \cite{Bemfica:2020zjp}. 

Despite the existence of numerical simulations developed by the heavy-ion physics community, the general properties displayed by solutions of the Israel-Stewart relativistic hydrodynamics equations are still broadly not understood.  Nonlinear conditions for causality involving shear and bulk viscosity have been derived in \cite{Bemfica:2020xym} and investigated in numerical simulations of heavy-ion collisions in \cite{Chiu:2021muk,Plumberg:2021bme}. The conditions under which Israel-Stewart-like theories involving shear, bulk, and heat conductivity/particle diffusion effects are causal and strongly hyperbolic in the fully nonlinear regime remain unknown. The only exception is when only bulk viscosity is present. In that case, strong hyperbolicity and local well-posedness have been established in the nonlinear regime in Ref.\ \cite{Bemfica:2019cop}. This is possible because, in the case of pure bulk viscosity, the equations of motion are considerably simpler than the full case where shear and particle diffusion are included. In fact, bulk-viscous effects in Israel-Stewart theories are described by a single Lorentz scalar, and its inclusion in the dynamics amounts to adding a single first-order relaxation-type equation to be solved together with the conservation laws. The system of equations corresponds to a set of nonlinear 1st-order PDEs that is not so different than the relativistic perfect fluid. Thus, it is conceivable that global well-posedness analyses of solutions of the equations of motion can also be made.

Progress in this direction was obtained in Ref.\ \cite{2020arXiv200803841D} where it was shown that in Israel-Stewart-like theories with bulk viscosity, there exists a class of smooth initial data for which the corresponding solutions to the Cauchy problem break down in finite time. Ref.\ \cite{2020arXiv200803841D} showed that, for appropriate data, there exists a finite $\mathcal{T} > 0$ such that, for times $t>\mathcal{T}$ solutions either cannot remain $C^1$ or they become acausal. Further work is needed to precisely determine the nature of this unphysical behavior (for example, whether this implies shock formation). In any case, it is important to stress that no results of this kind exist when the other dissipative effects are considered, such as shear viscosity or heat conduction. 

This work sheds new light on this problem from a different perspective. As mentioned above, in relativity, the dissipative quantities (shear-stress tensor, bulk viscous scalar, heat flow) cannot immediately take their Navier-Stokes values because of causality (for example, the conversion of flow gradients into acceleration is bounded by the speed of light). This implies that every causal and stable relativistic fluid is necessarily a non-Newtonian fluid. In this paper, we use this fact to take the first steps towards understanding the global behavior of nonlinear solutions of relativistic viscous fluids with bulk \emph{and} shear viscosity by investigating the same problem in an \emph{analog} problem corresponding to a similar non-Newtonian fluid defined by Israel-Stewart-like theories in the \emph{non-relativistic regime}\footnote{For simplicity, we only consider compressible systems with barotropic equations of state (so that one does not need to consider the dynamics of the temperature).}. We determine the conditions under which strong hyperbolicity and local well-posedness hold for the set of equations of motion with shear and bulk viscosity in three spatial dimensions considered in this paper. For typical choices of transport coefficients, we provide a proof \emph{\`a la} Sideris \cite{1985CMaPh.101..475S} that $C^{1}$ solutions in these viscous fluids lose regularity at a finite time. This is the first time such a general result has been proven for a large class of non-Newtonian fluids. Since they correspond to the non-relativistic regime of the Israel-Stewart theories currently used in the description of the quark-gluon plasma formed in heavy-ion collisions, our work shows, for the first time, that the regions in the quark-gluon plasma where the flow velocity approaches the non-relativistic limit can lose regularity at finite time.

Furthermore, the same can be said about the slow (i.e., non-relativistic) moving parts of the matter formed in neutron star mergers that have been described in terms of Israel-Stewart-like theories \cite{Camelio:2022ljs,Camelio:2022fds,Most:2022yhe,Chabanov:2023blf}. Therefore, our work not only unveils new mathematical properties of a large class of non-Newtonian fluids but also leads to a new fundamental understanding of the non-relativistic regime of relevant systems in high-energy nuclear physics and astrophysics. Furthermore, we emphasize that the non-Newtonian fluids we consider are also interesting in their own right as they appear in extended thermodynamic models \cite{JouetallBook} and also in the post-Newtonian expansion of relativistic fluids in the inspiral phase of neutron star mergers \cite{HegadeKR:2023glb}.

This paper is organized as follows. In Section \ref{section2}, we discuss the relativistic Israel-Stewart-like theory with bulk viscosity and define the equations of motion corresponding to its non-relativistic regime. We show that linearized disturbances around global equilibrium are stable in Section \ref{bulklocal}. Conditions are found in \ref{section4} for the equations of motion to admit a first-order symmetric hyperbolic formulation, which implies local well-posedness of the initial value problem with initial data suitably defined in a Sobolev space $H^N$ for sufficiently large $N$ \cite{KatoQuasiLinear}. Section \ref{GWPSec} shows how arguments originally derived by Sideris \cite{1985CMaPh.101..475S} for an ideal fluid could be used to prove that solutions of the non-relativistic viscous equations with bulk viscosity develop singularities at a finite time. We generalize these results to include shear viscosity in Section \ref{GWPshear}. After presenting the relevant computations, our treatment of the shear case closely follows that of the case with only bulk
and, thus, will be presented in a very concise manner.

\emph{Notation:} We use natural units $c = \hbar = k_B = 1$ and a mostly plus Minkowski metric $g_{\mu\nu}$. Greek indices run from 0 to 3, while Latin indices run from 1 to 3. When convenient, three-dimensional vectors $v^i$ are also denoted as $\bf{v}$, and inner products $v^i a_i = \bf{v}\cdot \bf{a}$.

\section{Non-relativistic Israel-Stewart-like equations with bulk viscosity}\label{section2}

This section briefly explains how one can derive the non-relativistic regime of the Israel-Stewart-like theory for bulk viscosity considered in \cite{Bemfica:2019cop}. This model describes bulk-viscous effects in a variety of systems, such as the quark-gluon plasma formed in heavy-ion collisions \cite{Romatschke:2017vte} and the chemical equilibration processes associated with flavor-changing reactions in the dense matter formed in neutron star mergers \cite{Camelio:2022ljs,Most:2022yhe,Gavassino:2023xkt}.

The relativistic fluid is defined by an energy-momentum tensor
\be
T_{\mu\nu} = (\varepsilon+P + \Pi)u_\mu u_\nu +(P+\Pi) g_{\mu\nu} 
\ee
and a conserved rest-mass current 
\be
J^\mu = \rho u^\mu.
\ee
Above, $\varepsilon$ is the total energy density, $P$ is the equilibrium pressure defined by the equation of state, $u^\mu$ is the 4-velocity of the fluid (which obeys $u_\mu u^\mu = -1$), $\rho$ is the rest-mass density, and $\Pi$ is the bulk scalar ($\Pi = 0$ in equilibrium). The evolution of the fluid is defined by the equations of motion that stem from the conservation of energy and momentum
\begin{equation}
\partial_\mu T^{\mu\nu} = 0,
\label{conservationEM}
\end{equation}
the conservation of the rest-mass current
\be
\partial_\mu J^\mu = 0,
\label{conservationbaryon}
\ee
and the additional relaxation-type equation for the bulk scalar
\begin{equation}
    \tau_\Pi u^\alpha \partial_\alpha \Pi + \delta_{\Pi\Pi}\Pi \,\partial_\alpha u^\alpha + \lambda \Pi^2 + \zeta \partial_\alpha u^\alpha = 0,
    \label{ISeq1}
\end{equation}
where $\zeta$ is the bulk viscosity coefficient, $\tau_\Pi $ is the bulk relaxation time, and $\delta_{\Pi\Pi}$ and $\lambda$ are second-order transport coefficients \cite{Denicol:2012cn}. It was shown in \cite{Bemfica:2019cop} that this relativistic fluid is causal, strongly hyperbolic, and the initial-value problem (IVP) is locally well-posed (LWP) if 
\be
\left[\frac{\zeta}{\tau_\Pi} + \frac{\delta_{\Pi\Pi}}{\tau_\Pi}\Pi + \rho \left(\frac{\partial P}{\partial \rho}\right)_\varepsilon\right]\frac{1}{\varepsilon+P+\Pi} \leq 1 - \left(\frac{\partial P}{\partial \varepsilon}\right)_\rho.
\label{eqPRLbulk}
\ee
We note that, above, the coefficients $\tau_\pi,\delta_{\Pi\Pi},\zeta, \lambda$ can depend on $\varepsilon$, $\rho$, and also $\Pi$. Therefore, \eqref{eqPRLbulk} actually defines a vast class of systems determined by how the transport coefficients depend on the dynamical variables. Causality is determined from the system's characteristics \cite{Bemfica:2019cop}. Strong hyperbolicity \cite{ReulaStrongHyperblic} follows from the result demonstrated in \cite{Bemfica:2019cop} that the full nonlinear set of equations of motion (even when dynamically coupled to Einstein's equations) can be written in FOSH form. The fact that this implies that the IVP is LWP follows from well-known arguments that can be found in standard references, such as \cite{Courant_and_Hilbert_book_2}.

This work investigates what can be said about this system in the non-relativistic regime. This limit of Eqs.\ \eqref{conservationEM}, \eqref{conservationbaryon}, and \eqref{ISeq1} is obtained as follows. First, recall that in the non-relativistic limit\footnote{Here, we will not investigate the non-relativistic limit from a rigorous perspective since our goal with such limit is to point out a formal connection between IS-like theories and the non-Newtonian fluids we consider. Our results below regarding the evolution problem for the non-Newtonian fluid, on the other hand, are mathematically rigorous.}, one uses the 3-velocity $v_i$ instead of the 4-velocity $u^\mu = \gamma_v(1,v^i)$, where $\gamma_v = 1/\sqrt{1-v_i v^i}$. In fact, one may simply drop the $\gamma_v$ and use in this regime $u^\mu \to (1,v^i)$ such that $u^\mu \partial_\mu \to \partial_t + v_i \partial^i$ (the material derivative). The same type of approximation implies $\partial_\mu u^\mu \to \partial_i v^i$. Thus, in this limit, conservation of the rest-mass current \eqref{conservationbaryon} becomes
\be
\partial_t \rho + \partial_i (\rho v^i)=0.
\label{nonrel_eq_mot1}
\ee
Using that in a non-relativistic regime the energy density of the fluid is essentially given by the rest-mass density and that the total pressure contribution to the energy density is negligible \cite{Rezzolla_Zanotti_book}, we find that $u_\nu \partial_\mu T^{\mu\nu}=0$ leads to Euler's equation for the velocity including a bulk viscous term
\be
\rho \partial_t v_i + \rho v^k \partial_k v_i + c_s^2\partial_i \rho + \partial_i \Pi = 0,
\label{nonrel_eq_mot2}
\ee
where we assumed that $P$ only depends on $\rho$, and defined the speed of sound 
\be
c_s = \sqrt{\frac{\partial P}{\partial \rho}},
\end{equation}
assumed to be nonvanishing and finite.
In this case, the energy conservation equation does not add any new information \cite{LandauLifshitzFluids}. Finally, assuming that the transport coefficients survive the Newtonian limit\footnote{In principle, one may devise other schemes where $\tau_\Pi$ vanishes in the non-relativistic regime, see \cite{HegadeKR:2023glb}. We will not consider this case here.}, one obtains for the Israel-Stewart equation for $\Pi$ \eqref{ISeq1} the following non-relativistic equation 
\be
\tau_\Pi \partial_t \Pi + \tau_\Pi \partial_i (v^i \Pi) + \Pi + \zeta \partial_i v^i=0,
\label{nonrel_eq_mot3}
\ee
where we assumed for simplicity that $\delta_{\Pi\Pi}=\tau_\Pi$ and $\lambda=0$. 
Equations \eqref{nonrel_eq_mot1}, \eqref{nonrel_eq_mot2}, and \eqref{nonrel_eq_mot3} define the non-relativistic limit of the Israel-Stewart theory with bulk viscosity considered in this paper. These constitute a system of nonlinear PDEs for the variables $\rho$, $v^i$, and $\Pi$. They describe the motion of a class of non-Newtonian fluids where the bulk scalar obeys a relaxation equation, which can be very general since $\zeta$ and $\tau_\Pi$ may depend on $\Pi$. We note that the Navier-Stokes equations for compressible fluid with bulk viscosity (but no shear viscosity) \cite{LandauLifshitzFluids} are obtained from \eqref{nonrel_eq_mot3} in the limit where $\tau_\Pi \to 0$ and $\zeta$ is only a function of $\rho$.

\section{Local Stability around the equilibrium state}\label{bulklocal}

We now investigate the local stability of equations \eqref{nonrel_eq_mot1}, \eqref{nonrel_eq_mot2}, and \eqref{nonrel_eq_mot3} against small disturbances around the equilibrium state. This analysis is standard, and we included it here for completeness. The goal is to show that the equilibrium state in the model is stable for all Galilean observers. This mirrors typical stability analyses done in the relativistic regime \cite{Hiscock_Lindblom_instability_1985}, where stability refers to the equilibrium state observed in any Lorentz frame. 

Assume that the system starts in a global equilibrium state where $\Pi = 0$ and $\rho = \rho_0>0$ and $v_i = v_i^0$ are constants, and is disturbed by a small deviation $\delta \rho$, $\delta v_i$, $\delta \Pi$. A system is locally stable if it returns to equilibrium, assuming the deviation is sufficiently small. This is quantifiable by looking at the Fourier transform of the equations: if the imaginary part of the frequency of the modes is negative, then the deviations will decrease in time and hence return the system to equilibrium.

Assuming these deviations are sufficiently small, we may eliminate terms that are not first-order, as the higher-order terms are insignificant for sufficiently small deviations. This leads  to the following equations:
\begin{align}
    \partial_t \delta \rho + \rho_0 \partial_k \delta v^k + v_k^0 \partial^k \delta \rho &= 0, \label{eq16}\\
    \rho_0\partial_t \delta v_i + c_s^2\partial_i \delta \rho + \rho_0 v_k^0 \partial^k \delta v_i + \partial_i \delta \Pi &= 0, \label{eq17}\\
   \tau_\Pi\partial_t \delta \Pi + \zeta \partial_k \delta v^k + \tau_\Pi v_k^0 \partial^k \delta \Pi + \delta \Pi &= 0. \label{eq18}
\end{align}
We note that $\zeta$, $c_s$, and $\tau_\Pi$ are constant in the equations above. This comes from the fact that we are linearizing the system of equations around the uniform equilibrium state. 

We look for solutions of the kind $\sim \exp\left(-i\omega t + i\,\mathbf{k}\cdot \mathbf{x}\right)$ by taking the Fourier transform of the equations of motion in both space and time, using variables $\omega$ for frequency and $\mathbf{k}$ for wavenumber:
\bea
    - ( \omega - \mathbf{v}_0 \cdot \mathbf{k}) \delta \rho +  \rho_0\, \mathbf{k} \cdot  \delta \mathbf{v}  &=& 0, \label{first}\\
     -  \rho_0 (\omega - \mathbf{v}_0 \cdot \mathbf{k})\delta v_j +  c_s^2 k_j  \delta \rho  + k_j \delta \Pi &=& 0, \label{sec}\\
    -i(\omega - \mathbf{v}_0 \cdot \mathbf{k}) \tau_\Pi \delta \Pi +   \delta \Pi + i\zeta\, \mathbf{k} \cdot \delta \mathbf{v} &=& 0, \label{third}
\eea
where above, to ease the notation, we have not distinguished $\delta \rho(t,\mathbf{x})$ from its Fourier transform $\delta \rho(\omega,\mathbf{k})$.

To better analyze the system's stability, let $\Omega$ be defined by $\Omega = \omega - \mathbf{v}_0 \cdot \mathbf{k}$. We note that, as $\mathbf{v}_0$ and $\mathbf{k}$ are real-valued, and the imaginary parts of $\Omega$ and $\omega$ are equal.
With this new notation, Eqs.\ \eqref{first} and \eqref{third} give either trivial solutions where $\mathbf{k}$ is orthogonal to $\delta \mathbf{v}$ and $\Omega = 0$, a degenerate solution, 
or allow us to solve for $\delta \rho$ and $\delta \Pi$ in terms of $\delta \mathbf{v}$. We use these solutions to combine our equations, obtaining
\begin{equation}
-i \tau_\Pi \Omega^3  + \Omega^2 - \mathbf{k}^2 c_s^2 + i \,\tau_\Pi\, \mathbf{k}^2 \Omega \left(c_s^2 + \frac{\zeta}{\rho_0}\right) = 0.
\end{equation}
This equation determines the dispersion relations of the modes, $\omega=\omega(\mathbf{k})$.  
To obtain local stability, we need the imaginary part of $\Omega$ to be negative or the real part of $-i \Omega $ (that is, $-i \omega$) to be negative. This condition implies exponential decay of the variables $\delta \rho$, $\delta v_i$, $\delta \Pi$ after taking the inverse Fourier transform. Since this equation can be viewed as a cubic in $-i \Omega$, we can  use the Routh-Hurwitz criterion \cite{gradshteyn2007} after bringing the equation into the desired form, with a positive constant term:
\begin{equation}
    \tau_\Pi (-i \Omega)^3 + (-i \Omega)^2 + \tau_\Pi \mathbf{k}^2 \left(c_s^2 + \frac{\zeta}{\rho_0}\right) (-i \Omega) + \mathbf{k}^2 c_s^2 = 0.
\end{equation}
Thus, applying the Routh-Hurwitz criterion, we see that the system is stable if $\Delta_i > 0$ for $i = 1, 2, 3$, where
\begin{align}
    \Delta_1 &= \mathbf{k}^2 c_s^2\\
    \Delta_2 &= \det \begin{pmatrix}
    \tau_\Pi \mathbf{k}^2 \left(c_s^2 + \frac{\zeta}{\rho_0}\right) & \mathbf{k}^2 c_s^2\\
    \tau_\Pi & 1
    \end{pmatrix}\\
    \Delta_3 &= \tau_\Pi \Delta_2.
\end{align}
Since $\mathbf{k}$ and $c_s$ are real, this condition is equivalent to
\begin{align}
    \tau_\Pi &> 0\\
    \tau_\Pi \frac{\zeta}{\rho_0} &> 0.
\end{align}
As $\tau_\Pi$ is the relaxation time, the condition above agrees with our intuition. Similarly, the density is assumed to be positive everywhere, and thus, we derive that stability implies that $\zeta > 0$ also. Therefore, disturbances near the equilibrium state are locally stable when $\tau_\Pi, \zeta, \rho > 0$ (note that this is valid also when $\tau_\Pi$ and $\zeta$ are nontrivial functions of the density). Hence, we have shown the local stability of these equations around the equilibrium state under basic physical assumptions. While further studies could be made about the linearized equations, this is beyond the scope of this paper as our goal here in this section is to establish that our equations display the expected behavior near equilibrium, where fluid dynamics is most easily understood.

\section{Hyperbolicity and Local well-posedness}\label{section4}

In this section, we show that the set of equations of motion \eqref{nonrel_eq_mot1}, \eqref{nonrel_eq_mot2}, and \eqref{nonrel_eq_mot3} for our non-relativistic Israel-Stewart model with
bulk viscosity is strongly hyperbolic and, hence, locally well-posed given suitable initial data. The proof below is quite general, as we do not assume that $\zeta$ and the relaxation time $\tau_\Pi$ depend only on the mass density $\rho$ (i.e., they may also depend on $\Pi$). This implies that our proof is valid for a very general class of non-Newtonian fluids parameterized by the dependence of the transport coefficients on the bulk stress. 

We rewrite this system of equations \eqref{nonrel_eq_mot1}, \eqref{nonrel_eq_mot2}, and \eqref{nonrel_eq_mot3} in quasilinear form as follows 
\be
\mathcal{A}^0 \partial_t \Phi + \mathcal{A}^1 \partial_1 \Phi + \mathcal{A}^2 \partial_2 \Phi+\mathcal{A}^3 \partial_3 \Phi + \mathcal{B} \Phi = 0,
\label{definePDE}
\ee
where $\Phi = \{\rho,v_1,v_2,v_3,\Pi\}$ and, in this case, $\mathcal{A}^0$, $\mathcal{A}^1$, $\mathcal{A}^2$, $\mathcal{A}^3$, $\mathcal{B}$ are $5 \times 5$ symmetric real-valued matrices given by
\bea
&&\mathcal{A}^0 = \begin{pmatrix}
 \frac{1}{\rho}  & 0 & 0 & 0 & 0  \\
  0 & \frac{\rho}{c_s^2} & 0 & 0 & 0\\
  0 & 0 & \frac{\rho}{c_s^2} & 0 & 0\\
  0 & 0 & 0 & \frac{\rho}{c_s^2} & 0 \\
   0 & 0 & 0 & 0 & \frac{\tau_\Pi}{\zeta c_s^2}
   \end{pmatrix},
\mathcal{A}^1 = \begin{pmatrix}
 \frac{v_1}{\rho}  & 1 & 0 & 0 & 0  \\
  1 & v_1\frac{\rho}{c_s^2} & 0 & 0 & \frac{1}{c_s^2}\\
  0 & 0 & v_1\frac{\rho}{c_s^2} & 0 & 0\\
  0 & 0 & 0 & v_1\frac{\rho}{c_s^2} & 0\\
  0 & \frac{1}{c_s^2} & 0 & 0 & \frac{\tau_\Pi v_1}{\zeta c_s^2}\\
   \end{pmatrix},
\mathcal{A}^2=\begin{pmatrix}
 \frac{v_2}{\rho}  & 0 & 1 & 0 &0  \\
  0 & v_2\frac{\rho}{c_s^2} & 0 & 0 & 0\\
  1 & 0 & v_2\frac{\rho}{c_s^2} & 0 & \frac{1}{c_s^2}\\
  0 & 0 & 0 & v_2\frac{\rho}{c_s^2} & 0 \\
  0 & 0 & \frac{1}{c_s^2} & 0 & \frac{\tau_\Pi v_2}{\zeta c_s^2} 
   \end{pmatrix}
\nonumber \\ &&\mathcal{A}^3= \begin{pmatrix}
 \frac{v_3}{\rho}  & 0 & 0 & 1 &0  \\
  0 & v_3\frac{\rho}{c_s^2} & 0 & 0 & 0\\
  0 & 0 & v_3\frac{\rho}{c_s^2} & 0 & 0\\
  1 & 0 & 0 & v_3\frac{\rho}{c_s^2} & \frac{1}{c_s^2}\\
  0 & 0 & 0 & \frac{1}{c_s^2} & \frac{\tau_\Pi v_3}{\zeta c_s^2}
   \end{pmatrix}, \mathcal{B}= \begin{pmatrix}
 0  & 0 & 0 & 0 &0  \\
  0 & 0 & 0 & 0 & 0\\
  0 & 0 & 0 & 0 & 0\\
  0 & 0 & 0 & 0 & 0\\
  0 & 0 & 0 & 0 & \frac{1}{\zeta c_s^2}.
   \end{pmatrix}.
\eea
Note that we have assumed that $\zeta$, $c_s$, and $\rho$ are nonzero.
Given that the matrices are real and symmetric, to show that these equations are FOSH, we need to find the conditions under which $\mathcal{A}^0$ is invertible. This matrix is diagonal and invertible when $\tau_\Pi$, $c_s$, and $\zeta$ are nonzero. In particular, based on our stability analysis in the previous section, it is clear that if one assumes $\tau_\Pi>0$, $\zeta>0$, and $\rho>0$, the equations of motion are FOSH and also locally stable. 

In the Physics literature, it is customary to define a set of quasilinear PDEs such as the one in \eqref{definePDE} as \emph{strongly hyperbolic} when the inverse $(\mathcal{A}^0)^{-1}$ exists and the matrices $(\mathcal{A}^0)^{-1}\mathcal{A}^i$ are diagonalizable with a set of real eigenvalues and a corresponding set of linearly independent right eigenvectors \cite{Rezzolla_Zanotti_book}. According to this definition, every FOSH system is strongly hyperbolic, as is our system of equations. Furthermore, strongly hyperbolic PDEs are known to have a locally well-posed initial-value problem, which means that solutions of our equations of motion exist (at least for some time) and are unique for suitably defined initial data \cite{Taylor3,ChoquetBruhatGRBook,Bemfica-Disconzi-Graber-2021}, although local well-posedness for FOSH can be proven without appealing to their strong hyperbolic character \cite{KatoQuasiLinear}.

Given that the system is FOSH, its characteristic velocities should be real and finite. We shall now compute the characteristics for the FOSH system given by \eqref{nonrel_eq_mot1}, \eqref{nonrel_eq_mot2}, and \eqref{nonrel_eq_mot3}. By making the standard identifications $\partial_t \to \xi_0$ and $\partial_i \to \xi_i$ \cite{ChoquetBruhatGRBook}, the matrix defining the principal part of \eqref{definePDE} is given by 
\begin{equation}
L = \xi_0 \mathcal{A}^0 + \xi_i \mathcal{A}^i  = \begin{pmatrix}
\frac{1}{\rho} \alpha & \xi_1 & \xi_2 &\xi_3 & 0\\
\xi_1 & \frac{\rho}{c_s^2} \alpha & 0 & 0 & \frac{\xi_1}{c_s^2}\\
\xi_2 & 0 & \frac{\rho}{c_s^2} \alpha & 0 & \frac{\xi_2}{c_s^2}\\
\xi_3 & 0 & 0 & \frac{\rho}{c_s^2} \alpha &\frac{\xi_3}{c_s^2}\\
0 &\frac{\xi_1}{c_s^2} &\frac{\xi_2}{c_s^2} &\frac{\xi_3}{c_s^2} &\frac{\tau_\Pi \alpha}{\zeta c_s^2}\end{pmatrix},
\end{equation}
where $\alpha = \xi_0 + \xi_i v^i$. The determinant of this matrix is given by
\begin{equation}
    \det L = \frac{\alpha^3 \tau_\Pi}{\zeta c_s^8}\left(\alpha^2 - c_v^2\,\xi_i \xi^i \right)
\end{equation}
where 
\be
c_v =\sqrt{c_s^2 + \frac{\zeta}{\rho \tau_\Pi}}.
\ee
Solving $\det L =0$ for $\alpha$, we obtain that 
either
\begin{equation}
\xi_0 = -v_i \xi^i
\end{equation}
or
\begin{equation}
\xi_0 = -v_i \xi^i \pm c_v \sqrt{\xi_i \xi^i}.
\end{equation}
These are real solutions if we assume that $\zeta, \rho, \tau_\Pi > 0$, i.e., the standard physical assumptions. Local well-posedness in $H^{\frac{5}{2}^+}$ of this FOSH set of PDEs follow from well-known results \cite{KatoQuasiLinear}. 
To accommodate conditions at infinity suitable for fluid applications (e.g., density approaching a non-zero constant at infinity), it is more convenient to work with uniformly local Sobolev spaces
$H^N_{\text{u.l.}}$, for which the results  \cite{KatoQuasiLinear} also apply. More precisely, from the foregoing, we have:

\begin{theorem} \label{T:LWP_bulk}
Let $\rho^0(\mathbf{x})=\rho(0,\mathbf{x})$, $\mathbf{v}^0(\mathbf{x})=\mathbf{v}(0,\mathbf{x})$,  
$\Pi^0(\mathbf{x})=\Pi(0,\mathbf{x})$ be initial data for the system  
\eqref{nonrel_eq_mot1}, \eqref{nonrel_eq_mot2}, and \eqref{nonrel_eq_mot3}. 
Suppose that $(\rho^0,\mathbf{v}^0,\Pi^0)(\mathbb{R}^3)$ is contained in the interior of a compact set\footnote{In other words, this means that along $\{t=0\}$ the data functions take values within a compact subset of state space.}
$K \subset \mathbb{R}\times\mathbb{R}^3\times\mathbb{R}$. Assume that $P = P(\rho), \tau_\Pi = \tau_\Pi(\rho,\Pi), \zeta = \zeta(\rho,\Pi)$ are smooth functions of their arguments and that there exists a constant $C>1$ such that $\rho^0 \geq 1/C$ and  $1/C \leq c_s(\rho^0) \leq C$, $1/C \leq \tau_\Pi(\rho^0,\Pi^0) \leq C$,
$1/C \leq \zeta(\rho^0,\Pi^0) \leq C$.
Finally, suppose that $(\rho^0,\mathbf{v}^0,\Pi^0) \in H^{\frac{5}{2}^+}_{\text{u.l.}}(\mathbb{R}^3)$.
Then, there exists a $T>0$ and a unique solution 
\begin{align}
(\rho, \mathbf{v}, \Pi) \in C^0([0,T], H^{\frac{5}{2}^+}_{\text{u.l.}}(\mathbb{R}^3) )\cap
C^1([0,T], H^{\frac{3}{2}^+}_{\text{u.l.}}(\mathbb{R}^3) )
\nonumber
\end{align}
to equations \eqref{nonrel_eq_mot1}, \eqref{nonrel_eq_mot2}, and \eqref{nonrel_eq_mot3} taking the given
initial data $(\rho^0,\mathbf{v}^0,\Pi^0)$. Moreover, the solution depends on continuously 
on the initial data relative to the $C^0([0,T], H^{\frac{5}{2}^+}_{\text{u.l.}}(\mathbb{R}^3) )$ topology. 
\end{theorem}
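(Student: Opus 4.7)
The plan is to invoke Kato's local existence and uniqueness theorem for quasilinear first-order symmetric hyperbolic systems \cite{KatoQuasiLinear}, applied directly to the FOSH formulation \eqref{definePDE} with the explicit matrices displayed in Section \ref{section4}. Since the system has already been put in symmetric hyperbolic form, the real work reduces to verifying Kato's hypotheses uniformly on a compact state-space neighborhood of the initial data, and to confirming that the argument persists in the uniformly local Sobolev framework $H^{N}_{\text{u.l.}}(\mathbb{R}^3)$.

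First I would enlarge $K$ to an open set $K'$ with $K \Subset K'$ on which $\rho$, $c_s(\rho)$, $\tau_\Pi(\rho,\Pi)$, and $\zeta(\rho,\Pi)$ remain strictly positive and uniformly bounded above; this is possible by continuity since the initial data lie in the interior of $K$ and satisfy the strict two-sided bounds encoded by $C$. On $K'$ the diagonal matrix $\mathcal{A}^0$ has entries bounded above and below by positive constants depending only on $C$, so $\mathcal{A}^0$ is uniformly positive definite. The assumed smoothness of the equation of state and of the transport coefficients, together with the polynomial dependence of the entries of $\mathcal{A}^i$ and $\mathcal{B}$ on $\Phi$ and the coefficient functions, yields the $C^\infty$ dependence of all coefficient matrices on the state vector required by Kato.

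Next I would apply the local existence theorem of \cite{KatoQuasiLinear} in $H^{N}_{\text{u.l.}}(\mathbb{R}^3)$ with $N = \tfrac{5}{2}^{+}$. In three space dimensions, this value is above the Sobolev embedding threshold $3/2$ into $C^{0}_b$ and, crucially, above $5/2$ into $C^{1}_b$, which both controls the characteristic speeds and makes the quasilinear composition with the smooth coefficients well-defined. The uniformly local framework, rather than standard $H^N(\mathbb{R}^3)$, is what accommodates fluid data that do not decay at infinity --- e.g.\ $\rho$ approaching a positive constant --- and Kato's argument extends to $H^{N}_{\text{u.l.}}$ via the localization/partition-of-unity scheme already present in that reference. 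This produces a lifespan $T>0$ and a unique solution $\Phi \in C^{0}([0,T], H^{5/2^{+}}_{\text{u.l.}})$ remaining inside $K'$, along with continuous dependence on the initial data in this topology.

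Finally, the $C^{1}([0,T], H^{3/2^{+}}_{\text{u.l.}})$ regularity is extracted by solving \eqref{definePDE} algebraically for $\partial_t \Phi$: uniform positive definiteness of $\mathcal{A}^0$ on $K'$ gives a smooth inverse, and $(\mathcal{A}^0)^{-1}\bigl[\mathcal{A}^i \partial_i \Phi + \mathcal{B}\Phi\bigr]$ lies in $H^{3/2^{+}}_{\text{u.l.}}$ by standard Moser-type product and composition estimates, which are valid above the embedding threshold and since $\partial_i \Phi \in C^{0}([0,T], H^{3/2^{+}}_{\text{u.l.}})$. The principal subtlety I anticipate is purely bookkeeping --- namely, keeping the evolving state inside $K'$, and hence away from the zeros of $\rho$, $\zeta$, $\tau_\Pi$, $c_s$, on $[0,T]$ --- which is handled by shrinking $T$ and using the continuous embedding $H^{5/2^{+}}_{\text{u.l.}} \hookrightarrow C^{0}_b$; the hard PDE content is absorbed into the black box of \cite{KatoQuasiLinear}.
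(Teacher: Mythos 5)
Your proposal matches the paper's own argument: the paper likewise casts the system in FOSH form with the explicit symmetric matrices, observes that $\mathcal{A}^0$ is diagonal and positive definite under the hypotheses $\rho,c_s,\tau_\Pi,\zeta>0$, and then invokes Kato's quasilinear symmetric hyperbolic theory in $H^{5/2^+}$, noting that the uniformly local Sobolev framework accommodates non-decaying background states. Your additional bookkeeping (enlarging $K$ to $K'$, solving for $\partial_t\Phi$ to get the $C^1$-in-time regularity, and using $H^{5/2^+}\hookrightarrow C^1_b$) only spells out details the paper leaves implicit, so the route is essentially identical.
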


We observe that, due to the Sobolev embedding theorem, solutions given by Theorem \ref{T:LWP_bulk} are  $C^1$ (i.e., they are continuously differentiable and thus classical solutions). We also note that the condition for hyperbolicity found in the non-relativistic model, $\zeta, \rho, \tau_\Pi,c_s > 0$, is simpler than the condition for hyperbolicity found in the full relativistic case in \eqref{eqPRLbulk}. Also, note the non-relativistic regime's lack of an upper bound on $\zeta$ and $\tau_\Pi$. This occurs because, differently than in the relativistic case, we do not have to impose that the characteristic velocities are subliminal. Finally, we note that $\tau_\Pi$ and $\zeta$ can be general functions not only on the density but also on the bulk stress $\Pi$. Thus, our results are valid for a vast class of non-Newtonian fluids parameterized by the dependence of the transport coefficients on the viscous stress.


\section{Breakdown of classical solutions\label{GWPSec}}


In this section, we investigate the global properties of the solutions of our system defined by \eqref{nonrel_eq_mot1}, \eqref{nonrel_eq_mot2}, and \eqref{nonrel_eq_mot3}. We showed in the previous section that LWP holds when $\zeta, \rho, \tau_\Pi,c_s > 0$. We now show that there is initial data for which $C^1$ solutions to the equations in three dimensions do not exist for all time. This is the first result of this kind for the broad class of non-Newtonian fluids considered in this paper. This is done by adapting the well-known result of Sideris proved in Ref.\ \cite{1985CMaPh.101..475S} to our system of equations describing a non-relativistic compressible bulk-viscous fluid. 

Consider the system described by Eqs.\ \eqref{nonrel_eq_mot1}, \eqref{nonrel_eq_mot2}, and \eqref{nonrel_eq_mot3}. Our equations can be considered a hyperbolic generalization of Euler equations with bulk viscosity. Thus, we will extend the proof of Theorem 1 of \cite{1985CMaPh.101..475S} to include viscous effects. The basic idea behind this theorem is to show that a suitably defined quantity related to the fluid's total momentum obeys a differential inequality whose solution possesses a finite life span.

We now introduce the following assumptions: assume that all the fields $\{\rho,v,\Pi\}$ depend on $t$ and $\mathbf{x}$, and that $\rho,\zeta,c_s, \tau_\Pi>0$. We further assume that the equation of state is $P(\rho) = A\, \rho^\gamma$, with constant $\gamma >1$ and $A>0$, and that $\zeta$ and $\tau_\Pi$ are positive constants. The latter assumption is prevalent in the literature \cite{LandauLifshitzFluids}, implying that the transport coefficients do not vary appreciably for the densities considered. 

We start by defining
\begin{equation}
\label{F_def}
    F(t) := \int_{\R^3} d^3 \mathbf{x}\,x_i\, \rho\,   v^i.
\end{equation}
This, and other integral quantities defined below, are easily seen to be finite under the assumptions of our theorems.
The quantity above corresponds to an average of the radial component of the momentum of the fluid, which remains finite as long as the solution is $C^1$. We show below that there is initial data for which $F$ is not bounded for an arbitrarily large time. 

We define our initial data as follows. Let $R > 0$ be the radius of some ball in $\mathbb{R}^3$ containing all the fluid at the initial time $t=0$. The initial data is given by $\rho(t = 0,\mathbf{x}) = \rho^0(\mathbf{x}) > 0$, $\mathbf{v}(t = 0,\mathbf{x}) = \mathbf{v}^0(\mathbf{x})$,  and  $\Pi(t = 0,\mathbf{x}) = \Pi^0(\mathbf{x})$, assumed to be $H^{\frac{5}{2}^+}$ (thus in particular $C^1$) functions in $\mathbb{R}^3$. Following \cite{1985CMaPh.101..475S}, we consider constant initial data outside the bounded set such that $\rho^0 = \bar\rho >0$, $\mathbf{v}^0(\mathbf{x}) =\bar{\mathbf{v}}$, and  $\Pi^0(\mathbf{x}) =\bar\Pi$ for $|\mathbf{x}| \geq R$. 

The results of the previous section guarantee that for this initial data, LWP holds for sufficiently regular data. However, for the following proof to hold, it is sufficient to assume that we have a $C^1$ solution, which will be the case, for example, for initial data in $H^{\frac{5}{2}^+}$, as already noted. Since $\bar{\mathbf{v}}=0$, the maximum propagation of the front of a smooth disturbance is governed by 
\be
\bar{c}_v = \sqrt{\bar{c}_s^{\,2} + \frac{\bar\zeta}{\bar\rho \,\bar{\tau}_\Pi}}
\ee
where 
\be
\bar{c}_s = \sqrt{A \gamma \bar\rho^{\,\gamma-1}}
\ee
is the (ideal fluid) speed of sound at the front, and $\bar\zeta$ and $\bar\tau_\Pi$ are determined by $\bar\rho$ and $\bar\Pi$. Thus, in terms of the domain \cite{1985CMaPh.101..475S}
\be
B(t) = \{\mathbf{x}\in \mathbb{R}^3: |\mathbf{x}|\leq R+ \bar{c}_v \,t \},
\ee
which is the region containing the non-constant part of the fluid, 
the hyperbolic nature of our equations implies that  $(\rho, \mathbf{v},\Pi) = (\bar\rho,0,\bar\Pi)$ outside $B(t)$. Furthermore, one can use \eqref{nonrel_eq_mot1} to show that the relative mass in $B(t)$ with respect to the background
\be
\label{DeltaM_def}
\Delta M(t) = \int_{\mathbb{R}^3}d^3\mathbf{x}\,\left(\rho -\bar\rho\right) =  \int_{B(t)}d^3\mathbf{x}\,\left(\rho -\bar\rho\right),
\ee
is constant in time. Finally, we define the average contribution from the non-equilibrium part of the pressure
\be
\label{G_def}
G(t) = \int_{\R^3}d^3\mathbf{x}\,\Pi.
\ee

We can now state the following result: 
\begin{theorem}\label{thm1} Suppose that $(\rho, \mathbf{v},\Pi)$ is a $C^1$  solution of \eqref{nonrel_eq_mot1}, \eqref{nonrel_eq_mot2}, and \eqref{nonrel_eq_mot3}. Suppose that
 $\rho(0,\mathbf{x}) = \rho^0(\mathbf{x}) > 0$, and that for some $R>0$,  
 $\rho^0(\mathbf{x}) = \bar\rho >0$, $\mathbf{v}(0,\mathbf{x}) = \mathbf{v}^0(\mathbf{x}) = 0$,  $\Pi(0,\mathbf{x})=\Pi^0(\mathbf{x}) = 0$ for $|\mathbf{x}| \geq R$, where $\bar{\rho}$ is a constant.
Assume that $\Delta M(0) \geq 0$, $F(0) > \frac{16\pi}{3} \bar{c}_v R^4 \max \rho^0(x)$, and $G(0)\geq 0$, where $\Delta M$, $F$, and $G$ are given by \eqref{DeltaM_def}, \eqref{F_def}, and \eqref{G_def}, respectively. 
Assume that the equation of state\footnote{The key property of the equation of state that is employed in the proof is convexity. More general, convex equations of state can be used with little or no change to the proof, but we will not do it here for the sake of concreteness.} is given by $P(\rho) = A \rho^\gamma$, where $\gamma>1$ and $A>0$ are constants.
Finally, suppose that $\tau_\Pi$ and $\zeta$ are positive constants.
Then, the lifespan\footnote{We define the lifespan of $C^1$ solutions in the usual way, i.e., as the supremum over all $\mathcal{T}>0$ such that the system admits a $C^1$ solution defined on $[0,\mathcal{T}]$.} of $(\rho, \mathbf{v},\Pi)$ is finite.
\end{theorem}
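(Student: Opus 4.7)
The plan is to adapt the integrated-moment method of Sideris \cite{1985CMaPh.101..475S} to the bulk-viscous system. First, I would combine the continuity equation \eqref{nonrel_eq_mot1} and the momentum equation \eqref{nonrel_eq_mot2} to put the system in conservation form, $\partial_t(\rho v^i) + \partial_k\bigl[\rho v^k v^i + (P-\bar P)\delta^{ki} + \Pi\,\delta^{ki}\bigr] = 0$, where $\bar P := P(\bar\rho)$ is subtracted harmlessly because it is constant. Differentiating $F(t)$ in time, substituting this identity, and integrating by parts against $x_i$---using that by finite propagation speed and the choice of data the quantities $\mathbf{v}$, $P - \bar P$, and $\Pi$ are all supported in $B(t)$, so boundary contributions at infinity vanish---should yield the key identity
\[
F'(t) = \int_{B(t)} \rho\,|\mathbf{v}|^2\, d^3\mathbf{x} + 3\int_{B(t)} (P - \bar P)\, d^3\mathbf{x} + 3\,G(t).
\]

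The next step is to show that each of the three terms is nonnegative. The kinetic integral is manifestly so. For the pressure integral, the strict convexity of $P(\rho) = A\rho^\gamma$ with $\gamma>1$ gives the supporting-line inequality $P(\rho) - \bar P \geq P'(\bar\rho)(\rho - \bar\rho)$; integrating and combining the hypothesis $\Delta M(0) \geq 0$ with the conservation $\Delta M(t) = \Delta M(0)$ (which follows from integrating \eqref{nonrel_eq_mot1} and exploiting the compact support of $\mathbf{v}$) yields $\int (P - \bar P)\, d^3\mathbf{x} \geq 0$. For $G(t)$, I would integrate the relaxation equation \eqref{nonrel_eq_mot3} over $\R^3$: since $\mathbf{v}$ is compactly supported in $B(t)$, both divergence terms $\tau_\Pi\int \partial_i(v^i\Pi)\,d^3\mathbf{x}$ and $\zeta\int \partial_i v^i\,d^3\mathbf{x}$ vanish, leaving the linear ODE $\tau_\Pi G'(t) + G(t) = 0$, whence $G(t) = G(0)\,e^{-t/\tau_\Pi} \geq 0$ under the hypothesis $G(0) \geq 0$. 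In particular, $F'(t) \geq \int_{B(t)} \rho\,|\mathbf{v}|^2\, d^3\mathbf{x}$.

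To close the argument, I would apply Cauchy--Schwarz to $F(t) = \int_{B(t)} x_i\,\rho\,v^i\,d^3\mathbf{x}$, using $|\mathbf{x}| \leq R + \bar c_v t$ on $B(t)$ and splitting $\rho = \rho^{1/2}\cdot\rho^{1/2}$, to obtain $F(t)^2 \leq (R+\bar c_v t)^2\, M(t)\,\int_{B(t)} \rho\,|\mathbf{v}|^2\, d^3\mathbf{x}$, where $M(t) = \Delta M(0) + \bar\rho\tfrac{4\pi}{3}(R+\bar c_v t)^3$. The crude but sufficient estimate $M(t) \leq \tfrac{4\pi}{3}\max\rho^0\,(R+\bar c_v t)^3$ (valid because $\bar\rho \leq \max\rho^0$ and $\Delta M(0) \leq \tfrac{4\pi}{3}R^3\max\rho^0$) then produces the Sideris-type differential inequality $F'(t) \geq F(t)^2/[\tfrac{4\pi}{3}\max\rho^0\,(R+\bar c_v t)^5]$. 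Since $F(0)>0$, $F$ is monotone increasing and stays positive, so dividing by $F^2$, integrating from $0$ to $t$, and evaluating $\int_0^t (R + \bar c_v s)^{-5}\,ds = (4\bar c_v)^{-1}\bigl[R^{-4} - (R+\bar c_v t)^{-4}\bigr]$ shows that the assumed lower bound $F(0) > \tfrac{16\pi}{3}\bar c_v R^4\max\rho^0$ is precisely what forces $1/F(t)$ to reach zero at some finite $\mathcal{T}>0$. Blow-up of $F$ contradicts its finiteness along any $C^1$ solution, thus bounding the lifespan above by $\mathcal{T}$.

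The main obstacle, compared with Sideris's original perfect-fluid argument, is that the new term $\partial_i \Pi$ in the momentum equation and the velocity-coupling inside \eqref{nonrel_eq_mot3} threaten to introduce sign-indefinite contributions that would spoil the monotonicity driving the argument. The rescue is the observation that, after integrating over $\R^3$ and using the compact support of $\mathbf{v}$, the bulk variable $G(t)$ fully decouples from the velocity and obeys a pure exponential decay law, so its sign is preserved throughout the evolution. Combined with the convexity of the equation of state, this reduces the analysis to the same scalar ordinary differential inequality that governs Sideris's argument, and the hypotheses $\Delta M(0),\,G(0)\geq 0$ together with the explicit lower bound on $F(0)$ are tailored exactly to its resolution.
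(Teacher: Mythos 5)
Your proposal is correct and follows essentially the same Sideris-style argument as the paper: differentiate the weighted momentum $F$, show the pressure excess and $G(t)$ are nonnegative, apply Cauchy--Schwarz and integrate the resulting Riccati-type inequality. One genuine (if small) difference is in the pressure step: the paper applies Jensen's inequality to $\int_{B(t)} \rho^\gamma$, while you use the supporting-line inequality $P(\rho) - \bar P \geq P'(\bar\rho)(\rho - \bar\rho)$ and then integrate, exploiting $\Delta M(0) \geq 0$ directly. Both rest on convexity and yield the same conclusion; your tangent-line route is arguably more elementary and makes the role of $\Delta M(0) \geq 0$ more transparent. One minor caveat: your justification of $M(t) \leq \tfrac{4\pi}{3}\max\rho^0\,(R+\bar c_v t)^3$ cites the bound $\Delta M(0) \leq \tfrac{4\pi}{3}R^3\max\rho^0$, which by itself gives only $M(t) \leq \tfrac{4\pi}{3}\max\rho^0\bigl[R^3 + (R+\bar c_v t)^3\bigr]$; you need the sharper $\Delta M(0) \leq \tfrac{4\pi}{3}R^3(\max\rho^0 - \bar\rho)$ so that the $R^3$ terms telescope. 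The bound you claim is correct, but the one-line justification as written does not quite deliver it.
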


\begin{proof} 
We prove the statement using proof by contradiction. We will show that  for some
$\mathcal{T}>0$ the quantity $F'(\mathcal{T})$ does not exist.

Suppose that $\rho, \mathbf{v}, \Pi$ are $C^1$ functions of $\mathbf{x}$ and $t$. Recall that $\bar{\mathbf{v}}=0$ and $\bar{\Pi}=0$. Using integration by parts, we obtain 
\begin{equation}
    \partial_t F(t) = \int_{B(t)} d^3\mathbf{x}\, \left[\rho |\mathbf{v}|^2 + 3(P - \bar{P})\right] + 3 G(t),
\end{equation}
where $\bar{P} = P(\bar{\rho})$. 
We will show below
 that $\int_{B(t)} d^3\mathbf{x}\, \left(P -\bar{P}\right) + G(t)  > 0$ when $t \geq 0$, which implies 
that $\partial_t F(t) \geq \int_{B(t)} d^3\mathbf{x}\, \rho |\mathbf{v}|^2 >  0$ for all time.

Following \cite{1985CMaPh.101..475S}, since $\gamma>1$, one finds that
\begin{align}
    \int_{B(t)}d^3\mathbf{x}\, P 
    &= A\int_{B(t)}  d^3\mathbf{x}\,\rho^\gamma  \nonumber \\
    &\geq A (\mbox{vol }B(t))^{1 - \gamma}\left(\int_{B(t)} d^3\mathbf{x}\,\rho\right)^{\gamma} \quad \mbox{by Jensen's inequality} \nonumber \\
    &= A (\mbox{vol }B(t))^{1 - \gamma} \left(\Delta M + \bar\rho\, \mbox{vol }B(t)\right)^{\gamma}\nonumber\\
    &\geq A \,\bar\rho^{\textcolor{blue}{\gamma}} \,\mbox{vol } B(t)\nonumber\\
    &= \int_{B(t)} d^3\mathbf{x}\,\bar{P}  ,
    \label{Pproof}
\end{align}
where $\mbox{vol } B(t) = \frac{4\pi}{3} (R+\bar{c}_v t)^3$.
To obtain the desired estimates for $\partial_t F(t)$, we must investigate the sign of $G(t)$. 

Integrating the equation of motion for $\Pi$ in \eqref{nonrel_eq_mot3} to find
\be
\tau_\Pi \partial_t G(t)+ G(t) = - \int_{B(t)}d^3\mathbf{x}\,\zeta \,\partial_i v^i. 
\ee
Solving this ODE for $G$, we have
\be
G(t)  = e^{-\frac{t}{\tau_\Pi}} G(0) - \frac{1}{\tau_\Pi}\int^t_0 dt'\,e^{-(t-t')/\tau_\Pi}\int_{B(t')}d^3\mathbf{x}\,\zeta\, \partial_i v^i(t^\prime,\mathbf{x}). 
\ee
This expression is general. But under our assumption that $\zeta$ is constant, 
we can integrate $\partial_i v^i$ by parts, and the last term on the right-hand side vanishes. Thus, when $G(0)\geq 0$, we have $G(t)\geq 0$. Therefore, we have shown that 
\be
\partial_t F(t) \geq \int_{B(t)} d^3\mathbf{x}\, \rho |\mathbf{v}|^2 > 0. 
\ee
Following \cite{1985CMaPh.101..475S}, we use the Cauchy-Schwarz inequality to find
\begin{equation}
    F(t)^2 \leq \left(\int_{B(t)}d^3\mathbf{x}\, |\mathbf{x}|^2 \rho \right)\left( \int_{B(t)} d^3\mathbf{x}\,\rho |\mathbf{v}|^2\right),
\end{equation}
and in addition, we have
\begin{equation}
    \int_{B(t)}d^3\mathbf{x}\, |\mathbf{x}|^2 \rho  \leq \frac{4\pi}{3} (R + \bar{c}_v t)^5 \max \rho^0(x),
\end{equation}
hence
\begin{equation}
    \partial_t F(t) \geq \left(\frac{4\pi}{3} (R + \bar{c}_v t)^5 \max \rho^0\right)^{-1} F(t)^2.
\end{equation}
However, dividing by $F(t)^2$ and integrating from $0$ to $\mathcal{T}$, we obtain
\begin{equation}
    F(0)^{-1} \geq F(0)^{-1} - F(\mathcal{T})^{-1} \geq \left(\frac{16\pi}{3} \bar{c}_v\max \rho^0\right)^{-1} (R^{-4} - \left(R + \bar{c}_v \mathcal{T})^{-4}\right)
\end{equation}
which contradicts the original assumption that $F(0)\geq \frac{16\pi}{3} \bar{c}_v R^4 \max \rho^0(x)$
if $\mathcal{T}$ is very large. Hence, the time of existence of the solution must be finite.  
\end{proof}

\section{Full Case with Shear and  bulk Viscosities}\label{GWPshear}

The effects of shear viscosity are significant in studying relativistic fluid dynamics. Many systems, such as gases and incompressible fluids, exhibit shear viscous effects. 
This section considers the full nonrelativistic equations, including the effects of shear viscosity and bulk viscosity. After casting the equations in a strongly hyperbolic form, our results in this section closely
parallel those of the previous sections. Thus, our presentation will be brief.

\subsection{Equations of motion and local well-posedness}

Here, we briefly describe the equations of motion in the shear case and local well-posedness results. As in Section \ref{section2}, one starts by considering the relativistic Israel-Stewart equations and taking the non-relativistic limit of these equations. We assume, for simplicity, that the shear relaxation time is equal to $\tau_\Pi$ and that the second-order transport coefficients are such that  the equations of motion for the fluid are in the end given by: 
\begin{align}
   \frac{1}{\rho} \partial_t \rho + \partial_k v^k + \frac{1}{\rho} v^k \partial_k \rho &= 0 \label{sheareq1}\\
   \frac{\rho}{c_s^2} \partial_t v_i + \partial_i \rho + \frac{\rho}{c_s^2} (v \cdot \partial) v_i + \frac{1}{c_s^2} \partial_j \Pi_{i}^j &= 0 \label{sheareq2}\\
   \tau_\Pi\partial_t \Pi_{ij} + \eta \partial_i v_j +\eta \partial_j v_i+ \delta_{ij}\left(\zeta-\frac{2\eta}{3}\right)\partial_k v^k
   + \tau_\Pi \partial_k(v^k \Pi_{ij}) + \Pi_{ij} &= 0. \label{sheareq4}
\end{align}
Here $\eta$ is the shear viscosity transport coefficient. This describes a class of non-Newtonian fluids with shear and bulk viscosity, where the viscous stress tensor relaxes towards its Navier-Stokes value. In this context, our equations resemble Grad's 13-moments theory \cite{Grad1958}, though we remark that in our case, there is a nonzero bulk viscosity (given that $\Pi_{ij}$ is not traceless) but no heat flux. Furthermore, we note that in our case, $\tau_\Pi$, $\zeta$, and $\eta$ can depend on $\rho$ and, also, on $\Pi_{ij}$, which contributes to the generality of our results. In addition, we note that the normalized trace of the viscous stress tensor, $\Pi \equiv \Pi_i^i/3$, obeys Eq.\ \eqref{nonrel_eq_mot3}. Finally, we stress that these are first-order nonlinear equations with a very similar structure to those discussed in Section \ref{section2}. While they no longer form a symmetric hyperbolic system (unless $\zeta=2\eta/3$), we will find that they have similar properties to the purely bulk-viscous case.

We now show that this non-Newtonian fluid with bulk and shear viscosity is strongly hyperbolic and, hence, locally well-posed for initial data in a sufficient Sobolev space. After rewriting the system in quasilinear form
\be
\mathcal{A}^0 \partial_t \Phi + \mathcal{A}^1 \partial_1 \Phi + \mathcal{A}^2 \partial_2 \Phi+\mathcal{A}^3 \partial_3 \Phi + \mathcal{B} \Phi = 0,
\ee
we may compute the characteristic matrix. Direct computation shows that it is diagonalizable, with all eigenvalues being real. The characteristic polynomial of the system is
\begin{equation}
    \frac{ \tau_\Pi^3 \alpha^4}{c_s^6 \rho} \left(\alpha^2 \rho \tau_\Pi - \eta \xi_i \xi^i\right)\left(\alpha^2 \rho \tau_\Pi - (\zeta + \frac{4}{3} \eta + c_s^2 \rho \tau_\Pi) 
 \xi_i \xi^i\right),
\end{equation}
where $\alpha = \xi_0 + v_i  \xi^i$, as in Section \ref{section2}.
We solve for $\alpha$, finding
\begin{equation}
     \alpha = 0, \sqrt{\frac{\eta}{\rho \tau_\Pi}}, \sqrt{c_s^2 + \frac{\zeta + \frac{4}{3}\eta}{\rho \tau_\Pi}}
\end{equation}
as our solutions. Since we have a first-order hyperbolic equation with a diagonalizable matrix, we conclude that it is 
locally well-posed in $H^{\frac{5}{2}^+}$ and in $H_{\text{u.l.}}^{\frac{5}{2}^+}$ \cite{Taylor3,ChoquetBruhatGRBook,Bemfica-Disconzi-Graber-2021}. In particular, the corresponding solutions are $C^1$. For the sake of brevity, we will not state a detailed LWP theorem, but the interested reader should have no difficulty mimicking Theorem \ref{T:LWP_bulk} for this case. In contrast to the full viscous case in the relativistic regime, where local well-posedness is not yet proven, we proved local well-posedness under basic physical assumptions in the non-relativistic limit. Once again, we did not need to assume any upper bounds on $\zeta, \Pi_{ij}, \tau_\Pi, \eta, \rho$. Furthermore, local well-posedness follows as long as the typical physical conditions are employed, i.e., $\rho, \tau_\Pi, \zeta, \eta > 0$. In particular, we also remark that hyperbolicity follows even if $\tau_\Pi$ and $\zeta$ and $\eta$ are nontrivial functions of the density and the stresses. We note that isotropy dictates that a dependence of the transport coefficients on $\Pi_{ij}$ can only appear via the rotationally invariant combinations $\Pi_{ij}\Pi^{ij}$ and  $\Pi = \Pi^{i}_i/3$. Thus, we see that our local well-posedness results for the system with shear and bulk viscosity are valid for a very general class of non-Newtonian fluids parameterized by the dependence of the transport coefficients on the stress tensor.

One can similarly establish local stability results, as in Section \ref{bulklocal}. For completeness, we show below in Section~\ref{appendixA} that if $\rho, \tau_\Pi, \zeta, \eta > 0$, the system is stable under perturbations around the equilibrium state. 


\subsection{Local stability around the equilibrium state}\label{appendixA}
In this section, we prove the local stability of \eqref{sheareq1}-\eqref{sheareq4} under small perturbations from equilibrium. As in section \ref{bulklocal}, we begin by taking the Fourier transform of our equations and linearizing around the equilibrium where $\Pi_{ij}$ vanishes. This procedure grants us the following system of equations:
\begin{align}
    -i ( \omega - \mathbf{v_0} \cdot \mathbf k)  \delta \rho + i \rho_0 \mathbf k \cdot \delta \mathbf v  &= 0 \\
     - i \frac{\rho_0}{c_s^2} (\omega - \mathbf{v_0} \cdot \mathbf k)\delta v_i + i k_j  \delta \rho  + \frac{1}{c_s^2} k_j \delta \Pi_{ij} &= 0 \\
    -i(\omega - \mathbf{v_0} \cdot \mathbf k) \tau_\Pi \delta \Pi_{jm} +   \delta \Pi_{jm} + i\eta k_j \delta v_m + i\eta k_m \delta v_j + i \delta_{jm}\left(\zeta - \frac{2\eta}{3}\right)k_\ell \delta v^\ell &= 0.
\end{align}
We now re-express this as a matrix equation
\begin{equation}
    C \Psi = 0
\end{equation}
where $\Psi = \begin{pmatrix}
  \delta \rho \\ \delta v_i \\ \delta \Pi_{ij}
\end{pmatrix}$
is a 10-vector. This equation has solutions if and only if $\det C = 0$. This gives us a polynomial, which we may use to solve for the value of $\Omega= \omega - \mathbf{v_0} \cdot \mathbf k$. This gives us the equation
\begin{equation}
    \frac{(1 - \tau_\Pi i \Omega)^3}{3 c_s^6 \rho}\left(-i\Omega (1 - i \tau_\Pi \Omega) \rho  + \eta \mathbf{k}^2 \right)\left(\rho \tau_\Pi(-i\Omega)^3  + \rho (-i\Omega)^2 + (-i\Omega)(3\zeta + 4\eta + c_s^2 \rho \tau_\Pi)\mathbf{k}^2 + c_s^2 \rho \mathbf{k}^2 \right) = 0.
\end{equation}
We now determine the conditions for $-i\Omega$ to have a positive real part, as we did in Section \ref{bulklocal}.
The linear term grants us that $\tau_\Pi > 0$, and using that with the quadratic term, we must also have $\rho > 0$, $\eta > 0$. Finally, using the Routh-Hurwitz condition for the cubic term, as we did in section \ref{bulklocal}, we obtain that $-i\Omega$ is negative if and only if $\tau_\Pi$, $\rho$, $\zeta$, $\eta > 0$, as desired. Therefore, we have shown that the full viscous non-Newtonian fluid is locally stable under the basic physical assumptions of $\tau_\Pi$, $\rho$, $\zeta$, $\eta > 0$.

\subsection{Breakdown of classical solutions}
This section extends Theorem \ref{thm1} to the full viscous case. 
The setting is very similar to that of Theorem \ref{thm1}, the main difference being that 
the assumption $\int_{\R^3} \Pi^0 \, d^3 \mathbf{x}\geq 0$ is now replaced by $\int_{\R^3} (\Pi^0)^i_i , d^3 \mathbf{x} \geq 0$. We do not need further assumptions on the other viscous shear stress tensor components or $\eta$. Therefore, the results in this section are very general when it comes to the properties of shear-viscous stresses because the shear viscosity coefficient $\eta$ can be a very general  function\footnote{It is physical to assume that $\eta>0$ for stability.} of $\Pi$ and $\Pi_{ij}\Pi^{ij}$, parameterizing a large class of non-Newtonian fluids.

\begin{theorem}
Suppose that $(\rho, \mathbf{v},\Pi_{ij})$ is a $C^1$ solution of \eqref{sheareq1}-\eqref{sheareq4}.
Suppose that
 $\rho(0,\mathbf{x}) = \rho^0(\mathbf{x}) > 0$, and that for some $R>0$,  
 $\rho^0(\mathbf{x}) = \bar\rho >0$, $\mathbf{v}(0,\mathbf{x}) = \mathbf{v}^0(\mathbf{x}) = 0$,  $\Pi_{ij}(0,\mathbf{x})=\Pi^0_{ij}(\mathbf{x}) = 0$ for $|\mathbf{x}| \geq R$, where $\bar{\rho}$ is a constant.    
Assume that $\Delta M(0) \geq 0$, $F(0) > \frac{16\pi}{3} \bar{c}_v R^4 \max \rho^0(x)$, and $G(0)\geq 0$, where $\Delta M$ and $F$ are given by \eqref{DeltaM_def} and \eqref{F_def}, respectively, and 
\begin{align}
\nonumber
G(t) := \int_{\mathbb{R}^3} \Pi^i_i \, d^3\mathbf{x}.
\end{align}
Assume that the equation of state is given by $P(\rho) = A \rho^\gamma$, where $\gamma>1$ and $A>0$ are constants.
Finally, suppose that $\tau_\Pi$ and $\zeta$ are positive constants.
Then, the lifespan of  $(\rho, \mathbf{v},\Pi_{ij})$ is finite.
\end{theorem}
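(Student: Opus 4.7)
The plan is to mimic the proof of Theorem \ref{thm1} almost line by line, the single genuinely new ingredient being the algebraic fact that the trace $\Pi^i_i$ of the viscous stress tensor satisfies a relaxation equation of exactly the same structural form as \eqref{nonrel_eq_mot3}. I would begin by rewriting the momentum equation \eqref{sheareq2} in conservation form, $\partial_t(\rho v_i) + \partial_j(\rho v_i v^j + P\delta_i^j + \Pi_i^j) = 0$, multiplying by $x^i$, and integrating over $\mathbb{R}^3$. By the strong hyperbolicity analysis of Section \ref{GWPshear}, the maximum propagation speed is $\bar{c}_v = \sqrt{\bar{c}_s^{\,2} + (\bar{\zeta} + 4\bar{\eta}/3)/(\bar{\rho}\bar{\tau}_\Pi)}$, so $\mathbf{v}$, $\Pi_{ij}$, and $\rho - \bar{\rho}$ are supported in $B(t) = \{|\mathbf{x}|\leq R + \bar{c}_v t\}$. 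Integration by parts, with the background contribution from $\bar{P}$ handled as in Theorem \ref{thm1} and with $\delta_i^j \Pi_j^i = \Pi^i_i$ producing the $G(t)$ term, gives $\partial_t F(t) = \int_{B(t)} d^3\mathbf{x}\,\bigl[\rho|\mathbf{v}|^2 + 3(P - \bar{P})\bigr] + G(t)$.

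The only step I expect to be at all subtle is the treatment of $G(t)$, and it is the one that must be identified correctly for the entire argument to work. Taking the trace of \eqref{sheareq4}, the cancellation $2\eta\,\partial_k v^k + 3(\zeta - 2\eta/3)\,\partial_k v^k = 3\zeta\,\partial_k v^k$ makes the shear coefficient $\eta$ drop out entirely, so $\Pi^i_i/3$ obeys \eqref{nonrel_eq_mot3} verbatim. Integrating this over $\mathbb{R}^3$ and using the constancy of $\zeta$ together with $\mathbf{v} = 0$ outside $B(t)$ to integrate the source term by parts, I recover the linear ODE $\tau_\Pi G'(t) + G(t) = 0$, whence $G(t) = e^{-t/\tau_\Pi} G(0) \geq 0$. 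Note that this mechanism does \emph{not} require $\eta$ to be constant, which is consistent with the hypotheses of the theorem, precisely because $\eta$ has disappeared from the trace equation. The Jensen-plus-mass-conservation bound $\int_{B(t)} (P - \bar{P})\, d^3\mathbf{x} \geq 0$ from Theorem \ref{thm1} depends only on the equation of state and on the continuity equation \eqref{sheareq1}, so it carries over unchanged and yields $\partial_t F(t) \geq \int_{B(t)} \rho|\mathbf{v}|^2\, d^3\mathbf{x} > 0$.

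From here the endgame is verbatim that of Theorem \ref{thm1}: Cauchy-Schwarz together with the geometric bound $\int_{B(t)} |\mathbf{x}|^2 \rho\, d^3\mathbf{x} \leq \frac{4\pi}{3}(R + \bar{c}_v t)^5 \max \rho^0$ produces the differential inequality $F'(t) \geq \bigl[\frac{4\pi}{3}(R+\bar{c}_v t)^5 \max \rho^0\bigr]^{-1} F(t)^2$, and integrating from $0$ to $\mathcal{T}$ contradicts the hypothesis $F(0) > \frac{16\pi}{3}\bar{c}_v R^4 \max \rho^0$ for sufficiently large $\mathcal{T}$. The overall structure thus decomposes into one delicate algebraic step (the trace cancellation, which hides the shear contribution from the evolution of $G$) followed by an exact transcription of the bulk-only proof.
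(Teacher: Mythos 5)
Your proposal is correct and follows the paper's proof exactly: the paper likewise derives $\partial_t F(t) = \int_{B(t)}\bigl[\rho|\mathbf{v}|^2 + 3(P-\bar P)\bigr]d^3\mathbf{x} + \int_{B(t)}\Pi^i_i\,d^3\mathbf{x}$ and then invokes the fact that $\Pi^i_i$ obeys the same relaxation equation as the bulk scalar (the $\eta$-cancellation you identify is exactly why the theorem needs no constancy assumption on $\eta$), after which the bulk-only argument applies verbatim. Your version is in fact slightly more explicit than the paper's, notably in spelling out the conservation-form integration by parts and the correct propagation speed $\bar c_v$ for the shear system.
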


\begin{proof}
Direct computation yields
\begin{align}
    \partial_t F(t) &=  \int_{B(t)} \rho |\mathbf{v}|^2 \, d^3\mathbf{x} + \int_{B(t)} 3(P - \bar P)\, d^3\mathbf{x} + \int_{B(t)} \Pi^i_i \, d^3\mathbf{x}
\end{align}
As the equation for $\Pi^i_i$ is the same as \eqref{nonrel_eq_mot3} from the purely bulk-viscous case, we may apply the rest of the proof and obtain the desired result.
\end{proof}

We emphasize that this is the first result of this kind in the literature. The theorem above gives precise conditions under which the solutions of the equations of motion of the system with shear and bulk viscosity lose $C^1$ status. 

\section{Conclusions and outlook}\label{conclusions}

In this paper, we considered a large class of non-Newtonian compressible fluids with shear and bulk viscosities, where the viscous stress tensor relaxes towards its Navier-Stokes form. After pointing out that such fluids can formally be derived from the non-relativistic limit of well-known Israel-Stewart-like theories, we considered their evolution problem. We showed that the Cauchy problem is locally well-posed in standard functions spaces but that it is not generally globally well-posed, as we have constructed initial data for which the solution's lifespan is finite. Interestingly, the shear part of the viscous stress tensor did not actively participate in the global well-posed analysis because the presence of singularities at finite time followed directly from the properties of the bulk viscosity sector. In fact, one could use the results presented here to investigate global well-posedness in an even larger class of theories, where the bulk sector would lead to singularities and the shear channel would act as a ``spectator" in the analysis. Therefore, our results show for the first time in the literature that a large class of non-Newtonian fluids, parameterized by the dependence of the transport coefficients on the stress tensor, have solutions that lose regularity at finite time. This is the first result of this kind when it comes to the global well-posedness properties of solutions of such systems. Since they correspond to the non-relativistic regime of the Israel-Stewart theories currently used in the description of the quark-gluon plasma formed in heavy-ion collisions, our work shows, for the first time, that the regions in the quark-gluon plasma where the flow velocity approaches the non-relativistic limit can lose regularity at finite time. Additionally, a similar statement can be made about the slow (i.e., non-relativistic) moving parts of the matter formed in neutron star mergers that have been described in terms of Israel-Stewart-like theories \cite{Camelio:2022ljs,Camelio:2022fds,Most:2022yhe,Chabanov:2023blf}. Therefore, our work reveals new mathematical properties of a large class of non-Newtonian fluids and leads to a deeper understanding of the non-relativistic regime of relevant systems in high-energy nuclear physics and astrophysics. 

A better understanding of the mathematical properties of this theory can be beneficial in analyzing the mathematical behavior of second-order theories of the Israel-Stewart type. While both theories share many features, such as the use of extended variables and a description of the evolution of the viscous component of the fluid via relaxation-type equations, the non-relativistic, non-Newtonian equations considered here are significantly simpler. Thus, understanding its mathematical features can be a valuable guide for the mathematical study of its relativistic counterpart. This is especially the case given that many important mathematical questions, such as LWP, remain open for the latter under general conditions, as mentioned in the introduction. 

In addition, it is interesting to notice that the class of non-Newtonian fluids studied here resembles (in essence) Grad's 13-moment theory and other extended variables approaches \cite{MuellerRuggeriBook,JouetallBook}. The latter are well-studied models of non-Newtonian fluids, which deserve full mathematical investigation on their own. In that context, it is known that Grad's equations lose hyperbolicity for significant deviations from equilibrium \cite{MuellerRuggeriBook}. The theories considered here remain strongly hyperbolic, even arbitrarily far from equilibrium. However, as mentioned above, the solutions can develop singularities at a finite time.   

    Finally, as mentioned previously, our results have direct consequences for the non-relativistic regime of non-Newtonian fluids found in high-energy nuclear physics and astrophysics. It would also be interesting to investigate applications of our results to other non-Newtonian fluids \cite{Abu-Nab_2021,Abu-Nab_2022}. Furthermore,  one may also explore how turbulence emerges in the class of non-Newtonian fluids considered here. Many studies of turbulence in non-Newtonian fluids exist in the literature, for instance, Ref.\ \cite{MORAD2021101527}. Differently than the standard case studied within the Navier-Stokes equations, where the transition from the viscous to the Eulerian limit corresponds to going from parabolic to hyperbolic equations, here both the ideal and viscous regimes are described by a hyperbolic set of equations of motion. Thus, it is conceivable that studying the transition to turbulence in the model considered here may shed light on the much less understood problem of relativistic turbulence \cite{Eyink:2017zfz,Calzetta:2020wzr}, where coupling to Einstein's equations requires that the fluid equations of motion must possess a locally well-posed initial value problem \cite{ChoquetBruhatGRBook}, regardless of whether there is dissipation or not. In particular, one may study fully developed turbulence in the non-Newtonian models considered here following Ref.\ \cite{Calzetta:2020wzr}. We leave these questions to future work.

\section*{Acknowledgments} The authors thank L.~Gavassino for discussions. AL thanks the Barry M. Goldwater Scholarship for support.  
MMD is partially supported by NSF grant DMS-2107701, a Chancelor's Faculty Fellowship, and a Vanderbilt Seeding Success Grant.
JN is partially supported by the U.S. Department of Energy, Office of Science, Office for Nuclear Physics under Award No. DE-SC0021301 and DE-SC0023861.  MMD and JN thank the KITP Santa Barbara for its hospitality during ``The Many Faces of Relativistic Fluid Dynamics" Program, where this work's last stages were completed. This research was partly supported by the National Science Foundation under Grant No. NSF PHY-1748958.

\bibliography{References.bib}
\end{document}